\newtheorem{theorem}{Theorem}
\newtheorem{lemma}{Lemma}
\theoremstyle{definition}
\newtheorem{przykl@d}{Example}
\newenvironment{przyklad}{%
  \begin{przykl@d}}{\qed\end{przykl@d}%
}
\theoremstyle{remark}
\newcommand{\bigO}{\mathit{O}}
\newcommand{\order}[1]{\bigO(n^{#1})}
\newcommand{\db}[3]{{#1}^{(#2)}_{#3}}
\newcommand{\transf}[3]{\db{#1}{#3}{#2}}
\newcommand{\met}[1]{$\mathcal{#1}$}
\newcommand{\metS}[1]{$\mathcal{S}_{#1}$}
\newcommand{\fN}{\mathbbm{N}}
\newcommand{\nsum}{\sum_{n=0}^\infty}
\title{Convergence acceleration of alternating series}
\author{%
Rafa\l{} Nowak%
\footnote{Institute of Computer Science, University of Wrocław,
  ul.~Joliot-Curie~15, 50-383 Wrocław, Poland,
  e-mail:
  \href{mailto:rafal.nowak@cs.uni.wroc.pl}{rafal.nowak@cs.uni.wroc.pl}}%
}
\date{\today}
\begin{document}
\maketitle

\begin{abstract}
We propose a~new simple convergence acceleration method for wide range class of convergent alternating series. It has some common features with Smith's and Ford's modification of Levin's and Weniger's sequence transformations, but its computational and memory cost is lower.
We compare all three methods and give some common theoretical results. Numerical examples confirm a~similar performance of all of them.
\end{abstract}

\section{Introduction}
This paper concerns the convergence acceleration of a~certain wide range class
of convergent alternating series.
More precisely:
$1^\circ$ a~new convergence acceleration method is given
and its certain theoretical properties are proved;
$2^\circ$ analogous properties for the Smith's and Ford's
\cite{SmithFord79} modification of Levin's and Weniger's $t$-transformations
(see also \cite[Eq.~(7.3-9)]{Weniger} or \cite[\S\,2.7]{BrezinskiZaglia91})
are proved and the similarities, as well as differences between all three
methods, are analyzed.

It is convenient to write the alternating series $\nsum a_n$ in the form
\begin{equation}\label{Ealtser}
\nsum (-1)^n\alpha_n,
\end{equation}
where $\alpha_n$ are all positive or negative.
In this paper we use the notation
\[ s_n \coloneqq \sum_{k=0}^{n} (-1)^k \alpha_k \]
for the partial sums of the series~\eqref{Ealtser}, whose limit,
in case of convergence, we denote by~$s$.

The mentioned class of alternating series concerns
convergent series~\eqref{Ealtser} such that $\alpha_n$ has an~asymptotic
expansion (as $n\to\infty$) of the form
\begin{equation}\label{Ealpha}
\alpha_n \sim x^n n^v\sum_{j=0}^\infty g_j n^{-j/r},
\end{equation}
provided that $x\in(0,1]$, $r\in\fN$ and $g_0\ne 0$.

It should be remarked that the series~\eqref{Ealtser}
with $\alpha_n$ satisfying relation~\eqref{Ealpha} is convergent, only
if we make a~certain additional assumption on numbers $v$ and~$x$.
Otherwise, we may deal with divergent series, whose summation may also be
useful. The detailed analysis of the convergence (and its acceleration) of the considered class of series (and more general, too) can be found in Sidi's book \cite[\S 8, \S 9]{Sidi03}. Namely, the class of~sequences $\{(-1)^n \alpha_n\}$
with $\alpha_n$ satisfying~\eqref{Ealpha} is a~subset of more general
class $\tilde{\mathbf{b}}^{(r)}$, given in~\cite[\S 6.6]{Sidi03}.

In the sequel the quantities
$
\beta_n\coloneqq\frac{\alpha_{n+1}}{\alpha_n}
$
play a~very important role.
One can verify that the asymptotic expansion in~\eqref{Ealpha} implies
\begin{equation}\label{Ebeta}
\beta_n\sim x\Bigl[1+\sum_{j=r}^\infty h_j n^{-j/r}\Bigr]
\qquad (n\to\infty);
\end{equation}
see, e.g., \cite[Thm.~6.6.4, p.~142]{Sidi03}.

From~\eqref{Ebeta} we conclude that
\( \beta_n = x\left[ 1+\order{-1} \right], \)
and thus
\begin{equation}\label{Ebetarel}
\frac{\beta_{n+1}}{\beta_{n+k}}=1+\order{-2}
\end{equation}
for any $k>0$.

In the simplest case of eq.~\eqref{Ealpha}
$v$ is~a~natural number and $r=1$.
This happens if $\alpha_n=x^nP(n)/Q(n)$
($P,\,Q$ being polynomials in~$n$).
Moreover, the coefficients of the polynomials $P,Q$ can depend on~$x$,
as in the following example:
\[\alpha_n=\frac{x^n(n+x^2)}{n+\ln(x+2)+1}.\]
A~much wider class of the series with $r=1$
refers to the hypergeometric functions.
Indeed, condition~\eqref{Ealpha} holds if the series~\eqref{Ealtser}
is identical, up to the~constant factor, with the function
\[{}_{p+1}F_p(a_1,a_2,\ldots,a_{p+1};b_1,b_2,\ldots,b_p;-x) = \sum_{n=0}^{\infty} (-1)^n \frac{(a_1)_n\cdots(a_{p+1})_n\,x^n}{(b_1)_n\cdots(b_p)_n\, n!},\]
which parameters $a_1,\ldots,a_{p+1}$, $b_1,\ldots,b_p$ and $x$ guarantee its alternation; notation $(z)_n$ means the Pochhammer symbol befined by $(z)_0 \coloneqq 1$, $(z)_j \coloneqq z(z+1)(z+2)\cdots(z+j-1)$, $(j\geq 1)$.
The relation~\eqref{Ebeta} is then evidently satisfied.

Further, the condition~\eqref{Ealpha} also holds if the terms $\alpha_n$
involve the roots in $n$ like, for~e.g., $\alpha_n=x^n\sqrt{n^2+2}$.
More examples with $r>1$ (and $v=-1$) are, for instance:
\begin{align}
\alpha_n & =\frac{x^n}{n+\sqrt{n}+1} \qquad (r=2),             \label{Eexr2} \\
\alpha_n & =\frac{x^n}{n+\sqrt{n}+\sqrt[3]{n+1}}, \qquad (r=6) \label{Eexr6}.
\end{align}
Let us note that such and similar terms $\alpha_n$
can be decomposed to a~sum of several terms $\db\alpha jn$,
for which the related quantities
$\db\beta jn\coloneqq \db\alpha j{n+1}/\db\alpha jn$
satisfy the equation \eqref{Ebeta} with $r=1$.
Indeed, one can decompose the expression in~\eqref{Eexr2} as follows:
\[\alpha_n=\frac{x^n(n-\sqrt{n}+1)}{(n+1)^2-n}=
\frac{x^n(n+1)}{n^2+n+1}-\frac{x^n\sqrt{n}}{n^2+n+1},\]
and thus the series $\nsum(-1)^n\alpha_n$ can be transformed
to the sum \(\nsum(-1)^n\transf\alpha n1+\nsum(-1)^n\transf\alpha n2,\)
where both quantities $\db\beta jn$, $j=1,2$,
satisfy the relation~\eqref{Ebeta} with $r=1$.
However, since all the summation methods, considered here, can be applied to
the~series~\eqref{Ealtser} satisfying~\eqref{Ebeta} with any natural number
$r$, it is hard to say if using these methods for each series
$\sum (-1)^n\db\alpha jn$ separately, gives actually better results.
One can check this is not true in the case of~\eqref{Eexr6}; see
Example~\ref{Ex7}.

The remainder of this paper is organized as follows.
Section~\ref{S:LW} deals with a~certain classic convergence
acceleration methods, such as Aitken's $\Delta^2$ method
and both Levin and Weniger transformations;
see~\cite{Aitken26}, \cite{Levin73} and~\cite{Weniger92}.
We consider there a~certain choice of the remainder estimates,
proposed by Smith and Ford in \cite{SmithFord79}
(see also \cite[\S\,2.7]{BrezinskiZaglia91}), in the case of Levin's
and Weniger's method, which we denote by the symbols \met L and \met W,
respectively.

It should be remarked that the $d^{(m)}$ transformation of Levin and Sidi
\cite{LevinSidi81} (with $m=r$) should also be an effective accelerator for
the considered series (see, e.g., \cite[\S6]{Sidi03}), as~well as more general
$\tilde d^{(m)}$ transformation developed by Sidi
(see the book \cite[pp.~147--148]{Sidi03} and the recent report~\cite{Sidi17}).

A~new method of convergence acceleration (denoted here by the symbol \met{S})
is presented in Section~\ref{S:Method}, which is followed in
Section~\ref{S:Wyniki} by a~discussion about common theoretical properties
including convergence acceleration theorem for all three methods
\met L, \met W and \met S.
In Section~\ref{S:Examples} we give some examples examining
the efficiency of the new method compared to the methods \met L and \met W.
All the examples except the last two consider the convergent series
\eqref{Ealtser} with $\alpha_n$ satisfying the relation \eqref{Ealpha}
with $r=1$.
One can check that the transformation $d^{(1)}$ of Levin and Sidi,
in the case of these examples, is equivalent to the method \met L,
provided the choice of parameters $R_l=l+1$, which is quite reasonable
for all of the considered examples in this paper.
Last two examples are the case with $r>1$ and thus, besides the comparison of
the efficiency of the methods \met S, \met L and \met W, we present the results
obtained by the $\tilde d^{(m)}$ transformation of Sidi (with $m=r$), as well.

Finally, in Section~\ref{S:divergent} we discuss the further properties
of the method~\met S, such as application to the summation of
divergent alternating series. Some remarks on efficient implementation of
the method~\met S are given therein, too.

\section{Levin and Weniger transformations}\label{S:LW}
The well-known Aitken's $\Delta^2$ method transforms a~given sequence $\{s_n\}$
into a~new sequence $\{s_n'\}$, defined by the formula
\begin{equation}\label{EAitk}
s'_n\coloneqq\frac{s_ns_{n+2}-s_{n+1}^2}{s_{n+2}-2s_{n+1}+s_n}.
\end{equation}
If the elements $s_n$ of the sequence to be transformed
are partial sums of alternating series~\eqref{Ealtser},
then
\begin{equation}\label{EAitks}
s'_n=\frac{\alpha_{n+2}s_{n}+\alpha_{n+1}s_{n+1}}{\alpha_{n+2}+\alpha_{n+1}}.
\end{equation}
Thus, the new sequence element $s'_n$ is a~weighted average of
the elements $s_n$ and $s_{n+1}$. These
weights are positive. Therefore, the numerical realization
of the Aitken's transformation has good stability properties.

It is important to note that the transformation~\eqref{EAitk}
can be easily iterated. Namely, one can use the sequence $\{s_n'\}$
as~a~sequence to be transformed, and obtain a~new sequence $\{s_n''\}$,
and so on; see, e.g., \cite[Eq.~(5.1-15)]{Weniger}. 
However, if the~elements $s_n$ are the partial sums
of~series~\eqref{Ealtser}, the process of iterating of the
transformation~\eqref{EAitks} is more subtle.
Indeed, in order to transform the sequence $\{s_n'\}$, one should replace
$\alpha_n$ in~\eqref{EAitks} with the terms of the series
\(s'_0+\nsum (s'_{n+1}-s'_n).\)
Computing these terms is not recommendable since one may be facing with
a~loss of significance caused by the cancellation of terms.
All the methods studied in this paper
do not have this disadvantage, although they are somehow derived from Aitken's
transformation.

The idea of the \textit{Levin transformation} \cite{Levin73} of the series $\nsum a_n$
is based on the assumption that the remainders of the partial sums have
the following Poincar\'e-type asymptotic expansion:
\begin{equation}\label{ELev}
s-s_m\sim\omega_m\sum_{j=0}^\infty \frac{d_j}{(m+b)^j} \qquad (\text{as }m\to\infty),
\end{equation}
where the shift parameter $b>0$ and~\textit{remainder estimates} $\omega_m$
should be chosen suitably for the considered class of the series.
Using the same notation as in~\cite{Weniger}, Levin transformation
can be expresses as follows:
\[ \db skn = %
\dfrac{\Delta^k\left[(n+b)^{k-1}\,\dfrac{s_n}{\omega_n}\right]}%
      {\Delta^k\left[(n+b)^{k-1}\,\dfrac{1}{\omega_n}\right]} = %
\dfrac{\displaystyle%
\sum_{j=0}^k (-1)^j \binom{k}{j} (n+j+b)^{k-1}\,\dfrac{s_{n+j}}{\omega_{n+j}}}%
{\displaystyle%
\sum_{j=0}^k (-1)^j \binom{k}{j} (n+j+b)^{k-1}\,\dfrac{1}{\omega_{n+j}}}.
\]
The choice of the remainder estimates has been widely discussed in the
literature; see, e.g., \cite{HomeierWeniger95}, \cite[\S\,7.3]{Weniger},
\cite[\S\,2.7]{BrezinskiZaglia91} or~\cite[\S\,5.3]{NumRec}.
However, the parameter $b$ is~usually chosen to be $1$.
In a~recent paper by~Abdalkhani and Levin~\cite{AbdalkhaniLevin2015}
the optimal value of this parameter was discussed
for a~certain variant of Levin transformation.

In the case of considered alternating series $\nsum a_n$,
the remainder has the following asymptotic expansion
\begin{equation}\label{Eremasympt}
  s-s_m \sim a_{m} \sum_{j=0}^\infty \gamma_j m^{-j/r}\qquad
   \text{as }m\to\infty,\quad \gamma_0 \neq 0;
\end{equation}
see, e.g., \cite[Thm.~6.6.6, pp.~145-147]{Sidi03}.
Thus it is recommendable to use
$\omega_m \coloneqq a_{m+1}$, i.e., Ford's and~Smith's
\cite{SmithFord79} modification of~Levin's $t$-transformation;
see also~\cite[\S\,2.7]{BrezinskiZaglia91}. In the sequel we denote
this method by the symbol \met{L}.

Any variant of Levin's method transforms the~sequence $\{s_n\}$
into a~doubly indexed sequence $\{\db skn\}$.
By definition, the element $\db skn$ is an~approximation
of the limit $s$ resulting from the system of the~equations
for $m=n,n+1,\ldots,n+k$, where only the terms with $j<k$ are retained.
Hence, the element $\db skn$ depends on all the values
$\alpha_j$ with $j\leq k+n+1$. For instance, in the case of method~\met L,
the element $\db s1n$ satisfies
the following system of two equations:
\[\db s1n-s_n=a_{n+1}d_0, \quad \db s1n-s_{n+1}=a_{n+2}d_0\]
with unknown $\db s1n$ and auxiliary coefficient $d_0$.
One can easily check that $\db s1n$ is exactly the value of $s_n'$
given by Aitken's transformation~\eqref{EAitks}.

\textit{Weniger transformation} is based upon an~assumption
similar to~\eqref{ELev}, and is given by
\begin{equation}\label{E:Weniger}
\db skn = %
\dfrac{\Delta^k\left[(n+b)_{k-1}\,\dfrac{s_n}{\omega_n}\right]}%
      {\Delta^k\left[(n+b)_{k-1}\,\dfrac{1}{\omega_n}\right]} = %
\dfrac{\displaystyle%
\sum_{j=0}^k (-1)^j \binom{k}{j} (n+j+b)_{k-1}\,\dfrac{s_{n+j}}{\omega_{n+j}}}%
{\displaystyle%
\sum_{j=0}^k (-1)^j \binom{k}{j} (n+j+b)_{k-1}\,\dfrac{1}{\omega_{n+j}}}.
\end{equation}
The only difference is that the powers $(m+b)^j$ are replaced by Pochhammer
symbols $(m+b)_j$; see \cite[\S\,8.2]{Weniger}.
Let us note that transformation~\eqref{E:Weniger} was invented independently
by Weniger and Sidi \cite{Sidi81} and later used by Shelef~\cite{Shelef} for
the numerical inversion of Laplace transforms.
Similarly to the method of Levin, we chose the remainder estimates $\omega_m = a_{m+1}$,
and denote this method by the symbol \met{W}.

One can check that both methods \met L and \met W
produce the double indexed arrays~of elements $\db skn$,
for which $\db s1n = s_n'$, i.e., both transformations $\db s1n$ are
equivalent to Aitken's transformation.
Further, both methods give the same values of $\db s2n$, which usually
are different than the ones obtained by the Aitken's iterated $\Delta^2$
process.

The parameter $b$ is usually chosen to be $1$ for both methods \met L and
\met W. We consider the same value for all presented numerical examples.

There are well-known recurrence formulas allowing for the efficient realization
of the Levin and Weniger transformations;
see, e.g., \cite[\S\,7.2, \S\,8.3]{Weniger}.
Both formulas are quite similar and use certain $3$-term recurrence relations
(see~\cite[Eqs. (7.3-2)--(7.2-6) and (8.3-1)--(8.3-5)]{Weniger})
satisfied by the following numerators $\db pkn$ and denominators~$\db qkn$:
\begin{equation}\label{ELWskn}
\db skn = \frac{\db pkn}{\db qkn} \qquad (k,\,n=0,1,\ldots).
\end{equation}

Their simplest variant may suffer from an~overflow that very often appears
during the recursive computation of numerators $\db pkn$ and
denominators~$\db qkn$. Hence,
it is recommendable to use so-called scaled versions of these
recurrence formulas; see~\cite[Eqs.~(7.2-8), (8.3-7)]{Weniger}.
For the case of $\{s_n\}$ being a~sequence of partial sums of the
alternating series~\eqref{Ealtser}, let us write these
$3$-term recurrence relations, for both methods \met L
and \met W, in the following way:
\begin{equation}\label{Epq0}
\db p0n\coloneqq s_n, \quad \db q0n\coloneqq 1,
\end{equation}
\begin{alignat}{2}
\label{Etphi}
\db{\tilde\varphi} kn &\coloneqq
\begin{dcases}
 \frac{(n+b+1)^{k-2}(n+k+b)}{(n+b)^{k-1}} & (\text{method $\mathcal{L}$}),\\
 \frac{n+b+2k-2}{n+b} & (\text{method $\mathcal{W}$}),
\end{dcases}
\end{alignat}
\begin{equation}\label{Erkn}
\db rkn\coloneqq\beta_{n+k}\db r{k-1}n+\db{\tilde\varphi}kn\db r{k-1}{n+1}
\qquad (r\equiv p,\,q;\; k\geq 1).
\end{equation}
Since the initial conditions~\eqref{Epq0} are the same for both methods,
which is not common in the literature,
the only difference comes from the choice of~the function $\db{\tilde\varphi}kn$
in the $3$-term recurrence relation~\eqref{Erkn}, satisfied by
numerators~$\db pkn$ and denominators~$\db qkn$.

For the convenience of later analysis and comparison with the new method~\met S,
let us observe that the quantities~$\db skn$, defined by~\eqref{ELWskn},
satisfy the recurrence relationship
\begin{equation}\label{ELWskn2}
  \transf snk =
  \frac{\beta_{n+1}}%
       {\beta_{n+1}+\transf{\varphi}nk}
  \transf sn{k-1}+
  \frac{\transf{\varphi}nk}%
       {\beta_{n+1}+\transf{\varphi}nk}
  \transf s{n+1}{k-1}
  \qquad (k>0),
\end{equation}
where
\begin{equation}\label{ELWphi}
  \transf{\varphi}nk = \transf{\tilde\varphi}nk
  \cdot\frac{\beta_{n+1}}{\beta_{n+k}}
  \cdot\frac{\transf q{n+1}{k-1}}{\transf qn{k-1}}.
\end{equation}
It is quite remarkable that the above formulas
permit to the compute the array $\db skn$ without
actually using the array of the numerators $\db pkn$.
Such realization of Levin and Weniger transformations
has probably not been considered in the literature, yet.

The following lemma displays some asymptotic property of the last fraction
in the right hand side of~equation~\eqref{ELWphi}, which we will use later
in the comparison involving all three methods~\met L, \met W, and \met S.
\begin{lemma}\label{Lq}
The quantities $\db qkn$ $(k>0)$, defined by eqs.~\eqref{Epq0}--\eqref{Erkn},
satisfy the relation
\[ \frac{\transf q{n+1}{k-1}}{\transf qn{k-1}} = 1 + \order{-2}. \]
\end{lemma}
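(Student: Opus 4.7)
The plan is to prove the statement by induction on $k$, exploiting the three--term recurrence \eqref{Erkn} directly and reducing everything to asymptotic information already available in the paper, namely \eqref{Ebetarel} and the explicit formula \eqref{Etphi} for $\tilde\varphi^{(k)}_n$.

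For the base case $k=1$ the ratio is $q^{(0)}_{n+1}/q^{(0)}_n = 1/1 = 1$ by \eqref{Epq0}, which is trivially $1+O(n^{-2})$. For the inductive step, I would set $R_n \coloneqq q^{(k-1)}_{n+1}/q^{(k-1)}_n$ and factor $q^{(k-1)}_n$ out of the recurrence \eqref{Erkn} to write
\[
  \db qkn = \db q{k-1}n\Bigl[\beta_{n+k}+\db{\tilde\varphi}kn R_n\Bigr],
  \qquad
  \db qk{n+1} = \db q{k-1}{n+1}\Bigl[\beta_{n+k+1}+\db{\tilde\varphi}k{n+1}R_{n+1}\Bigr],
\]
so that
\[
  \frac{\db qk{n+1}}{\db qkn}
  = R_n\cdot
    \frac{\beta_{n+k+1}+\db{\tilde\varphi}k{n+1}R_{n+1}}
         {\beta_{n+k}+\db{\tilde\varphi}kn R_n}.
\]
By the inductive hypothesis both $R_n$ and $R_{n+1}$ equal $1+O(n^{-2})$. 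From \eqref{Ebetarel} we have $\beta_{n+k+1}-\beta_{n+k}=O(n^{-2})$, and a direct expansion of \eqref{Etphi} gives, in both the Levin and Weniger variants,
\[
  \db{\tilde\varphi}kn = 1 + \frac{2k-2}{n} + \order{-2},
\]
whence $\db{\tilde\varphi}k{n+1}-\db{\tilde\varphi}kn=O(n^{-2})$. Consequently the numerator of the fraction differs from the denominator by a term of order $O(n^{-2})$. The denominator tends to $x+1\neq 0$, so it is bounded away from zero for large $n$, and therefore the fraction equals $1+O(n^{-2})$. Multiplying by $R_n=1+O(n^{-2})$ yields $q^{(k)}_{n+1}/q^{(k)}_n=1+O(n^{-2})$, closing the induction.

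The only slightly delicate step is the asymptotic analysis of $\db{\tilde\varphi}kn$ for the Levin case, where one has to verify that the quotient $(n+b+1)^{k-2}(n+k+b)/(n+b)^{k-1}$ equals $(1+1/(n+b))^{k-2}(1+k/(n+b))$ and then expand; but this is elementary. I also need that the denominator $\beta_{n+k}+\db{\tilde\varphi}kn R_n$ is bounded away from zero, which follows at once since its limit $x+1$ is positive, using $x\in(0,1]$ together with the induction hypothesis $R_n\to 1$. No other obstacle is expected; the whole argument is a short induction built on the two asymptotic inputs \eqref{Ebetarel} and \eqref{Etphi}.
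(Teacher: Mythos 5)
Your proof is correct, and it gets to the conclusion by a slightly more elementary route than the paper. The paper also runs an induction on $k$ over the recurrence \eqref{Erkn}, but what it establishes inductively is a stronger structural fact: $\db qkn$ admits a formal power series expansion $\db ik0+\sum_{j\geq r}\db ikj\, n^{-j/r}$ in the variable $n^{-1/r}$, crucially with no terms of order $n^{-j/r}$ for $0<j<r$; the ratio statement then follows because the first non-constant term is already $\bigO(n^{-1})$ and the shift $n\mapsto n+1$ gains an extra factor $n^{-1}$. You instead induct directly on the quantity of interest, the ratio $R_n=\db q{k-1}{n+1}/\db q{k-1}n$, and need only the two inputs $\beta_{n+k+1}-\beta_{n+k}=\bigO(n^{-2})$ (from \eqref{Ebetarel}, or directly from \eqref{Ebeta}) and $\db{\tilde\varphi}k{n+1}-\db{\tilde\varphi}kn=\bigO(n^{-2})$ (from the expansion $\db{\tilde\varphi}kn=1+(2k-2)n^{-1}+\bigO(n^{-2})$, which you verify correctly in both variants of \eqref{Etphi}), together with the positivity of the limit $x+1$ of the denominator. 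Your version avoids the formal-power-series bookkeeping and is self-contained; the paper's version yields more information about $\db qkn$ itself, in the same spirit as the expansions such as \eqref{EBkn} used later. One small point, shared with the paper's terse sketch, deserves an explicit sentence: the ratio $R_n$ is only well defined once one knows $\db q{k-1}n\neq 0$ for all sufficiently large $n$. This is delivered by your own induction (the bracket $\beta_{n+k}+\db{\tilde\varphi}kn R_n$ tends to $x+1>0$, whence $\db qkn\sim(x+1)^k\neq 0$), so you should carry the non-vanishing statement along as part of the inductive claim.
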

\begin{proof}
Using induction on~$k$ and the relation~\eqref{Ebeta},
one can check that the quantities $\db qkn$
have the following formal power series expansion in variable $n^{-1/r}$:
\[ \db qkn \sim \db ik0 + \sum_{j=r}^\infty \db ikj n^{-j/r}. \]
From this, we conclude the result.
\end{proof}
It should also be remarked that, in case of alternating series
satisfying the relation~\eqref{Ebeta}, the functions $\db\varphi kn$,
defined by eq.~\eqref{ELWphi}, satisfy the relationship:
\begin{equation}\label{Ephirel2}
  \frac{\transf\varphi nk}{\transf{\tilde\varphi}nk} = 1 + \bigO(n^{-2}).
\end{equation}
It follows from eq.~\eqref{Ebetarel} and~Lemma~\ref{Lq}.

\section{Method \met{S}}\label{S:Method}
The starting point for the derivation of the aforementioned method~\met S
is Aitken's $\Delta^2$ sequence transformation, given by formula~\eqref{EAitks}.
However, the main idea is based upon the relationship involving the dependence
of the differences $\Delta s'_n$ on the terms $\alpha_n$, which allows us to
use, and also iterate, the formulas similar to~\eqref{EAitks}.
For instance, the simplest variant
(which we denote by symbol~\met S) produces the double indexed
array $\db skn$ of approximations~of the limit~$s$ of the
series~\eqref{Ealtser} by using the following recursive scheme:
\begin{align}
\db  s 0n & \coloneqq  s_n \qquad (n\geq 0), \notag \\
\db  s kn & \coloneqq
\frac{(n+1)\alpha_{n+2}\,\db s{k-1}n+(n+2k-1)\alpha_{n+1}\,\db s{k-1}{n+1}}
{(n+1)\alpha_{n+2}+(n+2k-1)\alpha_{n+1}} \label{Eskn}
\qquad  (k\geq 1,\; n\geq 0).
\end{align}
The above formula reduces for $k=1$ and
gives $\db s1n$ ($n>0$) identical with~$s'_{n}$ related to
Aitken's transformation~\eqref{EAitks}, and thus identical
with $\db s1n$ obtained by both methods \met L and \met W
(cf.~\eqref{ELWskn2}), as well.

According to~\eqref{Eskn} the element $\db skn$
is a~weighted average of~elements~$\db s{k-1}n$ and $\db s{k-1}{n+1}$.
We would like to note that this formula (for $k>1$)
can be derived by using the following brief analysis,
which we will discuss in more details in Section~\ref{S:Wyniki}.
Observing, at least experimentally, that for $k=1$ we
obtain the differences $\Delta\db s1n$ being
proportional to $n^{-2}\alpha_n$ (more precisely
$\Delta\db s1n/\alpha_n = \order{-2}$),
one can try to consider the change of the coefficients
in the~weighted average $\db s2n$ in order to obtain
$\Delta\db s2n \sim n^{-4}\alpha_n$, and so on;
here, and in the sequel, the forward difference operator $\Delta$ acts
upon the lower index $n$.
It is possible if one indeed replaces $\alpha_n$ in formula~\eqref{Eskn}
(having $k=1$) with $\alpha_n/(n+1)_2$,
which is exactly the formula~\eqref{Eskn}
with $k=2$. In general, for $k>1$, one should replace
$\alpha_n$ with $\alpha_n/(n+1)_{2k-2}$, which gives exactly
the formula~\eqref{Eskn}.

The following facts are evident or easy to check:
$1^\circ$ the recursive scheme, which defines the method \met{S},
differs from the ones for the~methods~\met{L} and~\met{W}.
The quantities $\db skn$ can be determined in a~straightforward way,
i.e., without computing their numerators and denominators (cf.~\eqref{ELWskn});
$2^\circ$ $\db  s kn$ is~a~function of the terms
$\alpha_{n}$, $\alpha_{n+1}$, \ldots, $\alpha_{n+k+1}$;
$3^\circ$ unlike the methods~\met L and \met W, the formula \eqref{Eskn},
which defines the method~\met S, is not a~consequence of any assumption
on the asymptotic behavior of the remainder estimates,
such as, e.g., \eqref{ELev}.
Formula \eqref{Eskn} is also not a~result of any~general expression for
the partial sums of the series, nor of the system of equations followed from
it. It is not even known if such an expression or a~system of equations exists;
$4^\circ$ in general, the quantities $\db skn$, $k>1$, are different from the
ones computed by the method~\met L or \met W (or Aitken's iterated $\Delta^2$
process). Indeed, the methods~\met L and~\met W give
\[
  \db{s}2n=\frac{%
      (n+1)(1+\beta_{n+1})\beta_{n+2}\db{s}1n+(n+3)(1+\beta_{n+2})\db{s}1{n+1}}
      {(n+1)(1+\beta_{n+1})\beta_{n+2}+(n+3)(1+\beta_{n+2})},
\]
which is the same as $\db s2n$ computed by the method \met S,
if the quantities $\beta_n$, related to the series~\eqref{Ealtser},
satisfy a~certain functional equation.

The justification of the efficiency of the method \met{S}
is discussed in Section~\ref{S:Wyniki}.
More precisely, it refers to a~more general method,
which we denote in the sequel by the symbol $\mathcal{S}_\varphi$,
given by the following formula:
\begin{equation}\label{Egenskn}
\db skn=\frac{\beta_{n+1}}{\beta_{n+1}+\db\varphi kn}
        \db s{k-1}n+\frac{\db\varphi kn}{\beta_{n+1}+\db\varphi kn}
        \db s{k-1}{n+1} \quad (k>0)
\end{equation}
(cf.~\eqref{ELWskn2}), where the arbitrary functions $\db\varphi kn$ are such
that
\begin{equation}\label{Ephi}
\db\varphi kn = 1+(2k-2)n^{-1} + \bigO(n^{-2}) \qquad(k>0)
\end{equation} and
$\beta_{n+1}+\db\varphi kn\ne 0$
(which usually follows from the former condition),
$n,k>0$.

The above conditions are satisfied if, for instance,
\begin{equation}\label{Epartphi}
\db\varphi kn=\db{\tilde\varphi}{k}n\qquad(k>0),
\end{equation}
where $\db{\tilde\varphi}kn$ are given by eq.~\eqref{Etphi}
related to the methods \met L and~\met W.
Moreover, if $\db{\tilde\varphi}kn$ corresponds to the method~\met W (with $b=1$),
the formula \eqref{Egenskn} is equivalent to~\eqref{Eskn}, and thus
the method $\mathcal{S}_\varphi$ becomes the~method~\met S.
On the other hand, one can also consider such functions $\db\varphi kn$, as
$[(n+3)/(n+1)]^{k-1}$ or $[(n+2)/(n+1)]^{2k-2}$, for which the condition~\eqref{Ephi}
can be easily checked.
Then, using eq.~\eqref{Egenskn} appears to be more costly,
but may have some advantages like better numerical stability.
We believe it is worth doing more analysis on this.
Let us remark that in both mentioned variants of the functions
$\db\varphi kn$, $\db\varphi 1n=1$ holds for all $n$, and thus $\db s 1n=s'_{n}$,
like in methods~\met L and \met W.

\section{General theoretical results}\label{S:Wyniki}
It is notable that the similarities between all three methods \met L, \met W
and \met S follow from eqs.~\eqref{ELWskn2} and~\eqref{Egenskn},
which vary depending on the choice of the functions $\db\varphi kn$;
cf.~\eqref{ELWphi},~\eqref{Ephi} and~\eqref{Epartphi}.
For instance, the difference between the choice involving the function~\eqref{ELWphi}
(which gives the methods~\met L, \met W) and \eqref{Epartphi} (method~\met S)
is well depicted by the relation~\eqref{Ephirel2}.

Let us note that the statement of Theorem 1 that follow,
in the case of alternating series~\eqref{Ealtser} with $\alpha_n$ satisfying
\eqref{Ealpha} with $r=1$, is very similar to the classic results for Levin's
and Weniger's transformations; see, e.g., Weniger's report \cite[\S 13]{Weniger}
For the detailed analysis of the convergence acceleration of the alternating
series~\eqref{Ealtser} with $\alpha_n$ satisfying \eqref{Ealpha} with $r=1$
and the application of Levin's transformations to it, we refer to papers
by Sidi \cite{Sidi79}, \cite{Sidi80}.

For our consideration, the relation~\eqref{Ephirel2} plays the main role
in deriving a~theoretical properties common for the new method~\metS{\varphi}
and both methods \met L and \met W.
For the sake of analysis of all three methods, let us use common symbol $\db skn$
to denote the elements of the array computed by them.
It is important that in all three cases the quantities $\db skn$
satisfy the 3-term recurrence relation~\eqref{Egenskn}
where the~functions $\db\varphi kn$ depend on the considered
method; cf.~\eqref{ELWskn2}.

In order to study the convergence acceleration performed by the mentioned
methods, it is recommendable to investigate the differences $\Delta\db skn$.
Indeed, the quantities $\Delta\db skn$ (together with the element $\db{s} k0$),
$k>0$, are the terms of the series resulting
from the corresponding sequence transformation.
The efficiency of the method depends on whether these series
(for consecutive $k$) converge to limit $s$ faster and faster.
Hence, it is reasonable to compare the differences $\Delta\db skn$
to the original terms $\alpha_{n+1}$. For this reason, let us define
the following quantities:
\begin{equation}\label{EDkn}
  \db Dkn\coloneqq \frac{(-1)^{n+k+1}\Delta\db skn}{\alpha_{n+1}}, \qquad
  \db Bkn\coloneqq \frac{\beta_{n+1}}{\beta_{n+1}+\db\varphi kn}.
\end{equation}

\begin{lemma}\label{LDB}
The quantities $\db Dkn, \db Bkn$ satisfy the following relationship:
\[\db Dkn=\beta_{n+1}(1-\db Bk{n+1})\db D{k-1}{n+1}-\db Bkn\db D{k-1}n
  \qquad (n\geq 0, \; k>0).\]
\end{lemma}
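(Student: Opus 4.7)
The plan is to compute $\Delta s^{(k)}_n$ directly from the three-term recurrence \eqref{Egenskn}, rewrite it cleanly as a linear combination of $\Delta s^{(k-1)}_n$ and $\Delta s^{(k-1)}_{n+1}$, and then convert to the normalized quantities $D^{(k)}_n$ using the definition \eqref{EDkn} and the identity $\alpha_{n+2}/\alpha_{n+1}=\beta_{n+1}$.

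First I would rewrite \eqref{Egenskn} in the compact convex-combination form
\[
 s^{(k)}_n = B^{(k)}_n\,s^{(k-1)}_n + (1-B^{(k)}_n)\,s^{(k-1)}_{n+1},
\]
which is immediate from the definition $B^{(k)}_n=\beta_{n+1}/(\beta_{n+1}+\varphi^{(k)}_n)$. Applying the same identity with $n$ replaced by $n+1$ gives an expression for $s^{(k)}_{n+1}$, and subtraction yields
\[
 \Delta s^{(k)}_n = B^{(k)}_{n+1}s^{(k-1)}_{n+1}+(1-B^{(k)}_{n+1})s^{(k-1)}_{n+2}
  - B^{(k)}_n s^{(k-1)}_n-(1-B^{(k)}_n)s^{(k-1)}_{n+1}.
\]

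The key algebraic step is to expand $s^{(k-1)}_{n+2}=s^{(k-1)}_{n+1}+\Delta s^{(k-1)}_{n+1}$ and $s^{(k-1)}_n=s^{(k-1)}_{n+1}-\Delta s^{(k-1)}_n$ in the above. The coefficients of $s^{(k-1)}_{n+1}$ collapse to zero (they sum to $1-1=0$), leaving the clean identity
\[
 \Delta s^{(k)}_n = (1-B^{(k)}_{n+1})\,\Delta s^{(k-1)}_{n+1} + B^{(k)}_n\,\Delta s^{(k-1)}_n.
\]
This telescoping is the only non-trivial step; everything else is bookkeeping.

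Finally I would translate back into $D$-quantities. From \eqref{EDkn} we have
$\Delta s^{(k-1)}_n=(-1)^{n+k}\alpha_{n+1}\,D^{(k-1)}_n$ and
$\Delta s^{(k-1)}_{n+1}=(-1)^{n+k+1}\alpha_{n+2}\,D^{(k-1)}_{n+1}$; substituting these and $\Delta s^{(k)}_n=(-1)^{n+k+1}\alpha_{n+1}\,D^{(k)}_n$ into the displayed identity, then dividing through by $(-1)^{n+k+1}\alpha_{n+1}$ and using $\alpha_{n+2}/\alpha_{n+1}=\beta_{n+1}$, produces exactly
\[
 D^{(k)}_n = \beta_{n+1}(1-B^{(k)}_{n+1})\,D^{(k-1)}_{n+1} - B^{(k)}_n\,D^{(k-1)}_n.
\]

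The only part requiring care is the sign tracking when passing from differences to $D$-quantities, since $n$ and $k$ each shift the sign by one; the opposite signs on the two terms in the conclusion come precisely from the fact that $\Delta s^{(k-1)}_n$ and $\Delta s^{(k-1)}_{n+1}$ differ by a factor of $-1$ in their $D$-normalization. No asymptotic input (such as \eqref{Ebeta} or Lemma \ref{Lq}) is needed: this is a purely algebraic identity valid for any choice of $\varphi^{(k)}_n$ with $\beta_{n+1}+\varphi^{(k)}_n\neq 0$.
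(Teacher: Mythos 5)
Your proof is correct and follows essentially the same route as the paper: both derive the identity $\Delta\db{s}kn=(1-\db Bk{n+1})\Delta\db{s}{k-1}{n+1}+\db Bkn\Delta\db{s}{k-1}n$ from the convex-combination form of \eqref{Egenskn} and then normalize by $(-1)^{n+k+1}/\alpha_{n+1}$, using $\alpha_{n+2}/\alpha_{n+1}=\beta_{n+1}$ to absorb the index shift. Your sign bookkeeping and the observation that the argument is purely algebraic (no asymptotic input) are both accurate.
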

\begin{proof} It follows from eq.~\eqref{Egenskn} that
\[
\db{s} kn =\db{s} {k-1}{n+1}-\db Bkn\Delta\db{s} {k-1}n,
\]
and thus
\[\Delta\db{s} kn=(1-\db Bk{n+1})\Delta\db{s}{k-1}{n+1}+
\db Bkn\Delta\db{s}{k-1}n.\]
Now, by multiplying both hand sides by $(-1)^{n+k+1}/\alpha_{n+1}$,
we obtain the result.
\end{proof}

As mentioned in the previous section, the quantities $\db s2n$ can be identical
for all three methods if $\beta_n$ satisfies a~certain functional equation.
Indeed, for any series~\eqref{Ealtser}, the quantities $\db s2n$, defined by
eq.~\eqref{Egenskn}, are the same as the ones for the methods \met L and \met W,
if one takes the following function $\db\varphi2n$ written in terms
of $\beta_n$:
\[
  \db\varphi 2n=
  \frac{(n+3)(1+\beta_{n+2})\beta_{n+1}}{(n+1)(1+\beta_{n+1})\beta_{n+2}}.
\]
For $k>2$, the analysis of such similarities seems to be meaningless.
However, it is quite notable that for the choice
of the functions $\db\varphi kn$, such as~\eqref{ELWphi}
and~\eqref{Etphi}, equation~\eqref{Egenskn}, which defines the
method~\metS\varphi, leads to the method~\met L or~\met W.
Undoubtedly much more important is the meaning of the condition~\eqref{Ephi}
involving the functions $\db\varphi kn$, which are, for all three methods,
such that
\begin{equation}\label{Ephiprop}
\db\varphi{k}{n} \left[1-(2k-2)n^{-1}\right] = 1+\bigO(n^{-2});
\end{equation}
cf.~\eqref{Ephi}, \eqref{Epartphi} and~\eqref{Ephirel2}.
Namely, it is summarized in the following theoretical results.

\begin{theorem}\label{TDser}
Let $\db skn$ be the two-dimensional array computed by the method
\metS{\varphi}, given in~\eqref{Egenskn}, applied to~the series~\eqref{Ealtser}
with~$\alpha_n$ satisfying~\eqref{Ealpha}.
Then the differences $\Delta\db{s} kn$ satisfy the following relation:
\begin{equation}\label{EDa}
\frac{(-1)^{n+k+1}\Delta\db{s} kn}{\alpha_{n+1}}\sim
n^{-2k}\,\sum_{j=0}^\infty \db dkj n^{-j/r} \quad \text{as} \quad n\to\infty.
\end{equation}
\end{theorem}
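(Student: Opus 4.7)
The plan is induction on $k$, driven by the recurrence of Lemma~\ref{LDB}. The base case $k=0$ is immediate: directly, $\db D0n = (-1)^{n+1}\Delta s_n/\alpha_{n+1} = 1$, which is~\eqref{EDa} with $\db d00 = 1$ and $\db d0j = 0$ for $j > 0$.

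For the inductive step, I would first recast Lemma~\ref{LDB} in the form
\[
\db Dkn \;=\; c(n)\,\Delta\db D{k-1}n \;+\; \bigl[c(n) - d(n)\bigr]\,\db D{k-1}n,
\]
with $c(n)\coloneqq \beta_{n+1}\bigl(1 - \db Bk{n+1}\bigr)$ and $d(n)\coloneqq \db Bkn$. Using~\eqref{Ebeta} and the (assumed) asymptotic expansion of $\db\varphi kn$ in powers of $n^{-1/r}$ -- which is evident for all three concrete choices arising from~\eqref{ELWphi} and~\eqref{Epartphi} -- one shows that $c(n)$, $d(n)$ and $c(n)-d(n)$ each possess a formal expansion in $n^{-1/r}$. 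The decisive two-term computation is
\[
c(n) \;=\; \tfrac{x}{x+1} + O(n^{-1}), \qquad d(n) \;=\; \tfrac{x}{x+1} + O(n^{-1}), \qquad c(n) - d(n) \;=\; \frac{x(2k-2)}{(x+1)\,n} + O(n^{-2}).
\]

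Combining these with the inductive hypothesis $\db D{k-1}n \sim n^{-2(k-1)}\sum_j \db d{k-1}j\, n^{-j/r}$, and observing that $\Delta\db D{k-1}n$ has leading term $-2(k-1)\,\db d{k-1}0\, n^{-(2k-1)}$, one finds that each of the two pieces on the right is a priori only of order $n^{-(2k-1)}$, with leading coefficients $\tfrac{x}{x+1}\cdot\bigl(-2(k-1)\bigr)\,\db d{k-1}0$ and $\tfrac{x(2k-2)}{x+1}\,\db d{k-1}0$ respectively; these cancel exactly. The surviving expansion then starts at $n^{-2k}$ and is again a formal power series in $n^{-1/r}$, closing the induction. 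This cancellation is precisely what condition~\eqref{Ephi} is tailored to produce: the coefficient $2k-2$ of $n^{-1}$ in $\db\varphi kn$ is calibrated so that the $n^{-(2k-1)}$ terms annihilate.

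The main obstacle is the careful second-order expansion of $c(n) - d(n)$: one has to verify that the contributions from $\beta_{n+1}-x$ (which is itself $O(n^{-1})$) cancel symmetrically between $c(n)$ and $d(n)$, so that at order $n^{-1}$ only the term produced by $\db\varphi kn - 1$ survives, producing the factor $2k-2$ that precisely matches the leading coefficient from applying $\Delta$ to $n^{-2(k-1)}$. Once this identity is pinned down, the rest of the induction is routine manipulation of formal power series in $n^{-1/r}$, given that $\db\varphi kn$ admits a full expansion in $n^{-1/r}$ and not merely the leading two terms promised by~\eqref{Ephi}.
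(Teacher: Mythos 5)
Your proof is correct and follows essentially the same route as the paper's: induction on $k$ via Lemma~\ref{LDB}, with the expansion $\db Bkn=\tfrac{x}{x+1}+\bigO(n^{-1})$ and the calibration $\db\varphi kn=1+(2k-2)n^{-1}+\bigO(n^{-2})$ producing exactly the cancellation at order $n^{-(2k-1)}$; your split into $c(n)\,\Delta\db D{k-1}n+[c(n)-d(n)]\,\db D{k-1}n$ is just a repackaging of the paper's step $\db D{k-1}{n+1}=[1-(2k-2)n^{-1}+\bigO(n^{-2})]\db D{k-1}n$ combined with $\beta_{n+1}(1-\db Bk{n+1})=\db\varphi kn\db Bkn+\bigO(n^{-2})$. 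Your closing remark that the full expansion in~\eqref{EDa} (as opposed to the mere order estimate $\bigO(n^{-2k})$) requires $\db\varphi kn$ to admit a complete expansion in $n^{-1/r}$ is a fair observation that the paper leaves implicit.
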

\begin{proof} The proof follows by induction on $k$.
Since $\Delta\db{s} 0n=\Delta s_n=(-1)^{n+1}\alpha_{n+1}$,
the series in~\eqref{EDa} (for $k=0$) simplifies to the constant $\db d00=1$.
Now, let $k>0$ be given. 
Taking into account the~relation~\eqref{Ebeta} in the definition of the
quantities~$\db Bkn$, we conclude that
\begin{equation}\label{EBkn}
\db Bkn=\xi+\db gk1 n^{-1}+\db gk2 n^{-(r+1)/r}\ldots,\;\;
\text{where}\;\xi\coloneqq \frac{x}{x+1},
\end{equation}
and thus
\begin{equation}\label{EBkn1}
\db Bk{n+1}=\xi+\db gk1(n+1)^{-1}+\db gk2(n+1)^{-(r+1)/r}+\ldots=
\db Bkn+\bigO(n^{-2}).
\end{equation}
In the same way, one may check that
\( \db\varphi{k}{n+1} = \db\varphi{k}{n} + \bigO(n^{-2}). \)
Moreover, from~\eqref{Ebetarel}, it immediately follows that
\( {\beta_{n+1}}/{\beta_{n+2}} = 1 + \bigO(n^{-2}). \)
Hence, we get
\begin{equation}\label{Ebetaphi}
\beta_{n+1}(1-\db Bk{n+1}) = \beta_{n+1}
\frac{\db\varphi{k}{n+1}}{\beta_{n+2}+\db\varphi{k}{n+1}}
=\frac{\beta_{n+1}}{\beta_{n+2}} \db\varphi{k}{n+1} \db Bk{n+1}
=\db\varphi{k}{n}\db Bkn+\bigO(n^{-2}).
\end{equation}
From the principle of the induction, it follows that
\begin{align*}
\db D{k-1}n & =\db d{k-1}0 n^{-2k+2}+\db d{k-1}1 n^{-2k+2-1/r}+\ldots, \\
\db D{k-1}{n+1} & =\db d{k-1}0 (n+1)^{-2k+2}+\db d{k-1}1 (n+2)^{-2k+2-1/r}+\ldots= \\
& =[1-(2k-2)n^{-1}+\bigO(n^{-2})]\db D{k-1}n.
\end{align*}
Therefore, in view of~\eqref{Ephiprop}, we conclude that
\begin{align*}
\db Dkn &= \bigl(\db\varphi{k}{n}\db Bkn+\bigO(n^{-2}) \bigr) \db D{k-1}{n+1}-\db Bkn\db D{k-1}n\\
&= \db\varphi{k}{n}\db Bkn\bigl(1-(2k-2)n^{-1}+\bigO(n^{-2})\bigr)\db D{k-1}n-\db Bkn\db D{k-1}n\\
&=\bigl((1+\bigO(n^{-2}))\db Bkn-\db Bkn+\bigO(n^{-2})\bigr)\db D{k-1}n
=\bigO(n^{-2})\db D{k-1}n,
\end{align*}
and the proof is complete.
\end{proof}
The evident meaning of the above result is as follows: the larger is the value
of~$k$, the less are the absolute values of the differences $\Delta\db{s} kn$
(at least for sufficiently great values of~$n$),
and thus, the faster is the convergence of $\db{s} kn$ to $s$.
Similar results, but only for the methods~\met L and~\met W (and with $r=1$
in \eqref{Ealpha}), can be found in the Weniger's report \cite[Thms.~13-5, 13-9, pp. 114, 117]{Weniger}.

It is also worth considering the influence of the choice of the
functions $\db\varphi kn$ on the asymptotic behavior of the differences
$\Delta\db skn$ that appear in Theorem~\ref{TDser}.
Of course, this dependence is related to the values $\db dkj$, which,
in general, are usually unknown. This is somewhat displayed in the
following result.

\begin{theorem}\label{TDD}
Let the quantities $\db skn$ be as in the previous theorem.
Then the following relation links the quantities $\db\varphi kn$ with $\db Dkn$,
given in~\eqref{Ephi} and \eqref{EDkn}, respectively:
\[\frac{\db Dkn}{\db Dk{n+1}}=\db\varphi{k+1}{n}[1+\bigO(n^{-2})].\]
\end{theorem}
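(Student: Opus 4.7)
The plan is to derive the claimed identity directly from the three-term recurrence of Lemma~\ref{LDB}, applied with $k$ replaced by $k+1$:
\[ \db{D}{k+1}{n} = \beta_{n+1}(1-\db{B}{k+1}{n+1})\db{D}{k}{n+1} - \db{B}{k+1}{n}\db{D}{k}{n}. \]
Indeed, this one identity already contains the ratio $\db{D}{k}{n}/\db{D}{k}{n+1}$ multiplied by $\db{B}{k+1}{n}$, and the prefactor $\beta_{n+1}(1-\db{B}{k+1}{n+1})$ has already been analyzed in the proof of Theorem~\ref{TDser}.

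First I would invoke equation~\eqref{Ebetaphi} (established in the proof of Theorem~\ref{TDser}) with $k$ replaced by $k+1$ to rewrite $\beta_{n+1}(1-\db{B}{k+1}{n+1}) = \db{\varphi}{k+1}{n}\db{B}{k+1}{n}+\bigO(n^{-2})$. Substituting this into the recurrence and solving for $\db{D}{k}{n}$ gives
\[ \db{D}{k}{n} = \db{\varphi}{k+1}{n}\,\db{D}{k}{n+1} \;-\; \frac{\db{D}{k+1}{n}}{\db{B}{k+1}{n}} \;+\; \frac{\bigO(n^{-2})}{\db{B}{k+1}{n}}\,\db{D}{k}{n+1}. \]
Next I would observe, from expansion~\eqref{EBkn}, that $\db{B}{k+1}{n}=\xi+\bigO(n^{-1})$ with $\xi=x/(x+1)>0$, so $1/\db{B}{k+1}{n}$ is bounded as $n\to\infty$ and may be absorbed into the $\bigO(n^{-2})$ term.

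The decisive step is then to invoke Theorem~\ref{TDser} at the two consecutive indices $k$ and $k+1$: it furnishes the sharp orders $\db{D}{k}{n+1}\sim \db{d}{k}{0}\,n^{-2k}$ (with $\db{d}{k}{0}\ne 0$) and $\db{D}{k+1}{n}=\bigO(n^{-2k-2})$. Consequently $\db{D}{k+1}{n}=\bigO(n^{-2})\,\db{D}{k}{n+1}$, and after dividing through by $\db{D}{k}{n+1}$ we arrive at
\[ \frac{\db{D}{k}{n}}{\db{D}{k}{n+1}} = \db{\varphi}{k+1}{n} + \bigO(n^{-2}) = \db{\varphi}{k+1}{n}\bigl[1+\bigO(n^{-2})\bigr], \]
where the last equality uses that $\db{\varphi}{k+1}{n}=1+\bigO(n^{-1})$ is bounded away from $0$.

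I expect the main obstacle to be purely bookkeeping of the error terms rather than any genuinely new estimate: the essential analytic content---namely the sharpness of the $n^{-2k}$ order and the specific form of the prefactor $\beta_{n+1}(1-\db{B}{k+1}{n+1})$---has already been extracted in Theorem~\ref{TDser} and its proof. The only delicate point is that the argument crucially relies on $\db{d}{k}{0}\ne 0$, so that $\db{D}{k+1}{n}/\db{D}{k}{n+1}$ is genuinely $\bigO(n^{-2})$ and not of smaller order, which is exactly the quantitative content of Theorem~\ref{TDser}.
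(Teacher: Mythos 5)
Your proposal is correct and follows essentially the same route as the paper: replace $k$ by $k+1$ in Lemma~\ref{LDB}, substitute the estimate~\eqref{Ebetaphi} for $\beta_{n+1}(1-\db B{k+1}{n+1})$, use Theorem~\ref{TDser} to discard $\db D{k+1}n$ as a relative $\bigO(n^{-2})$ term, and finish with the boundedness away from zero of $\db B{k+1}n$ from~\eqref{EBkn}. The delicate point you flag --- that one needs $\db Dkn$ to be genuinely of order $n^{-2k}$, i.e.\ $\db dk0\neq 0$, for the quotient $\db D{k+1}n/\db Dkn$ to be $\bigO(n^{-2})$ --- is present, equally implicitly, in the paper's own argument, so it is not a gap relative to the published proof.
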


\begin{proof} By replacing $k$ with $k+1$ in Lemma~\ref{LDB},
we have that
\[\beta_{n+1}(1-\db B{k+1}{n+1})\frac{\db Dk{n+1}}{\db Dkn}=
\db B{k+1}n+\frac{\db D{k+1}n}{\db Dkn}.\]
Hence, by Theorem~\ref{TDser}, the quotient $\frac{\db D{k+1}n}{\db Dkn}$ is of
order $\bigO(n^{-2})$,
and thus, replacing $k$ with $k+1$ in~\eqref{Ebetaphi} yields
\[\frac{\db Dkn}{\db Dk{n+1}}=
\frac{\beta_{n+1}(1-\db B{k+1}{n+1})}{\db B{k+1}n+\bigO(n^{-2})}=
\frac{\db\varphi{k+1}{n}\db B{k+1}n+\bigO(n^{-2})}{\db B{k+1}n+\bigO(n^{-2})}.\]
Now, the result follows from~\eqref{EBkn}.
\end{proof}

As we mentioned in the previous section,
for each method \met L, \met W and \met S,
we have $\db\varphi 1n=1$. The final remark is that $\Delta\db s1n/\alpha_{n+1}$
simplifies to
\begin{equation}\label{Dsi1}
\frac{\Delta\db{s} 1n}{\alpha_{n+1}}=
(-1)^{n}\beta_{n+1}\Delta\left(\frac{1}{1+\beta_{n+1}}\right),
\end{equation}
which for many series can be easily expressed just in terms of $n$.

\section{Numerical examples}\label{S:Examples}
Let us consider the method \met{S}, defined by eq.~\eqref{Eskn},
and the mentioned variants of Levin and Weniger transformations,
defined by formulas \eqref{Epq0}--\eqref{Erkn}
and denoted by symbols \met L and~\met W, respectively.

If the terms $\alpha_n$ of the series~\eqref{Ealtser} to be transformed
are sufficiently simple, and if $k$ is rather small,
one can try to find explicit expression
for the quantities $\db Dkn$ and verify the statements of
Theorems \ref{TDser} and~\ref{TDD}, given in the previous section.
Let us recall that for $k=0,1$ all three methods produce the same
values of $\db skn$, while, for $k=2$, it is evidently true only for the
methods of Levin and Weniger.

For instance, if $\alpha_n=1/(n+1)$, then we have
\begin{equation*}
  \db D1n  =-\frac{n+2}{(n+3)(2n+5)(2n+7)}=-\frac{1}{4}\,n^{-2}+\bigO(n^{-3}), \\
\end{equation*}
\begin{equation*}
\db D2n =
\begin{dcases}
-{\frac {n+2}{ ( n+3 )  ( 2{n}^{2}+14n+25) ( 2{n}^{2}+10n+13 ) }},\\
-{\frac { ( n+2 )  ( 10{n}^{3}+91{n}^{2}+273n+264 ) }{ ( 2n+9 )  ( 2{n}^{2}+13n+22 )  ( 2n+5 )  ( 2{n}^{2}+9n+11 ) ( 2n+7 )  ( n+3 ) }}
\end{dcases}
\end{equation*}
(the first formula corresponds to the methods \met L and \met W;
second --- to the method \met S).
This is in agreement with Theorem~\ref{TDser}, since
\begin{equation*}
\db D2n =
\begin{dcases}
-\frac{1}{4}n^{-4}+\bigO(n^{-5})  & \text{(methods \met L, \met W)},  \\
-\frac{5}{16}n^{-4}+\bigO(n^{-5}) & \text{(method \met S)}.
\end{dcases}
\end{equation*}
The comparison of the leading coefficients of the asymptotic expansions of
the values $\db D2n$ shows that the methods~\met L and \met W
yield a~little bit better result than the method \met S.
In contrast, the method \met S is better than the others for $k=3$.
Indeed, one may check that
\begin{equation*}
\db D3n =
\begin{dcases}
-\frac{3}{16}n^{-6}+\bigO(n^{-7})  & \text{(methods \met L, \met W)},  \\
-\frac{9}{64}n^{-6}+\bigO(n^{-7}) & \text{(method \met S)}.
\end{dcases}
\end{equation*}
Further comparison, i.e., for $k>3$, seems to be pointless.

For
\[\alpha_n\coloneqq \frac{(2n+2)!}{4^nn!(n+2)!}\,x^n\]
the expression for $\db D1n$ is rather complicated, i.e.,
\[\db D1n=-\frac{(n+3)(2n+5)(n^3+3n-1)x^2}{W(n)W(n+1)}=-\frac{x^2}{2(x+1)^2}\,n^{-2}+\bigO(n^{-3}),\]
where $W(n)\coloneqq (n+3)(2n+5)x+2(n+2)(n+4)$.
However, one can check that
\[\frac{1}{\db\varphi 2{n+1}}\,\frac{\db D1n}{\db D1{n+1}}=
1-\frac{17+15x}{2+2x}\,n^{-2}+\bigO(n^{-3}),\]
which is in agreement with Theorem~\ref{TDD}.

We compared the performance of the methods \met{L}, \met{W} and~\met{S}
numerically, by applying them to several alternating series~\eqref{Ealtser}
of different types. For each example below, we present as follows:
the form of the series (including the values $x,v,r$ in the
relation~\eqref{Ealpha}) and its limit $s$,
the accuracy of the quantities $\db sk0$, $k=3,4,\ldots$,
for all three methods (first row corresponds to the method \met S,
second row --- \met L, third row --- \met W).
First five examples are the case of $r=1$ and last two are not.

Let us remark that $\tilde d^{(m)}$ transformation of Sidi
(with $m=r$ and $\hat\sigma=0$)
is also effective accelerator of the considered series; see,~e.g., \cite[\S6.6.4, pp. 147--148]{Sidi03}.
The accuracy of the quantities $\db{\tilde d}{m,k}{0}$ for $k=3,4,\ldots$ is
given (in the fourth row) in the case of last two examples, since only
then ${\tilde d}^{(m)}$ transformation is not equivalent to the method \met L.
We choose $\hat\sigma=0$ since
for the the considered series $\nsum a_n$ we have
\[ a_n / \Delta a_n \in \tilde{\textbf{A}}_0^{(\sigma,m)}
  \quad\text{with}\quad\sigma=0;\]
see \cite[Thm.~6.6.5]{Sidi03} and \cite[\S6.6.1]{Sidi03} for the details on
the class $\tilde{\textbf{A}}_0^{(\sigma,m)}$.

Here, the accuracy of the approximation~$\tilde{s}$ of the sum~$s\neq 0$
is measured by $-\log_{10}|\tilde{s}/s-1|$, i.e., by
the number of exact significant decimal digits.
As it was mentioned before, the classic methods of Levin and Weniger give the
same values of $\db skn$ for $k=0,1,2$.

All the numerical experiments were made
using \textsf{IEEE 754} double extended precision, i.e.~$80$-bit floating-point arithmetic,
which means about 19 decimal digits precision.

Let use note the in the examples that follow, it appears that the numerical results
produced by the method~\met{S} seem to be similar to those obtained by
the classic Levin and Weniger transformations, as well as the Sidi's generalization of them.

\begin{przyklad}[$x=1, v=-2, r=1$]
\[\nsum \frac{(-1)^n}{n^2+1}=0.63601\,45274\,91066\,581\]
\footnotesize
\[\begin{array}[15]{rrrrrrrrrrrrrrr}
  3.6 &  5.1 &  6.1 &  7.0 &  8.7 &  9.2 & 10.4 & 11.6 & 12.5 & 14.1 & 14.6 & 17.1 & 16.8 \\
  3.9 &  5.6 &  6.1 &  7.4 &  9.1 &  9.7 & 11.1 & 12.8 & 13.4 & 14.8 & 16.6 & 17.2 & 18.7 \\
  5.1 &  5.1 &  6.2 &  7.6 &  9.2 & 10.2 & 10.9 & 11.7 & 12.7 & 13.6 & 14.5 & 15.5 & 16.4
\end{array}\]
\end{przyklad}


\begin{przyklad}[$x=1, v=-3/2, r=1$] 
\begin{align*}
{}_3F_2\bigl(1,1,\tfrac 32;2,2;-1\bigr) & =
\nsum \frac{(-1)^n(2n+1)!}{4^n[(n+1)!]^2}= \\
& =4\ln\bigl(0.5+\sqrt{0.5}\bigr)=0.75290\,56258\,83839\,086
\end{align*}
{\footnotesize
\[\begin{array}[15]{rrrrrrrrrrrrrr}
  4.0 &  5.2 &  7.1 &  7.7 &  8.9 & 10.4 & 11.1 & 13.1 & 13.4 & 14.8 & 15.7 & 16.9 & 18.0 \\
  4.1 &  5.4 &  7.0 &  8.1 &  9.1 & 10.4 & 13.9 & 12.9 & 14.0 & 15.8 & 16.7 & 17.7 &  19.0 \\
  4.7 &  5.9 &  7.5 & 10.2 & 10.2 & 11.2 & 12.3 & 13.4 & 14.5 & 15.5 & 16.6 & 17.6 & 18.7
\end{array}\]
}
\end{przyklad}

\begin{przyklad}[$x=1, v=-1, r=1$] 
\[\nsum \frac{(-1)^{n}}{n+1}=\ln 2=0.69314\,71805\,59945\,309\]
{\footnotesize
\[\begin{array}[14]{rrrrrrrrrrrrrr}
  3.9 &  5.3 &  7.0 &  7.6 &  9.5 & 10.0 & 11.4 & 12.4 & 13.5 & 14.7 & 15.7 & 17.1 & 17.9 \\
  4.0 &  5.3 &  7.0 &  8.1 &  9.1 & 10.5 & 12.3 & 12.8 & 14.1 & 17.0 & 16.6 & 17.8 & 18.8 \\
  4.9 &  5.9 &  7.2 &  8.6 & 10.1 & 11.6 & 13.1 & 14.6 & 16.1 & 17.6 & 19.0 &  19.0 & 19.0
\end{array}\]
}
\end{przyklad}

\begin{przyklad}[$x=2/3, v=-1/2, r=1$] 
\[{}_2F_1\bigl(\tfrac 32,2;3;-\tfrac 23\bigr)=
\nsum \frac{(-1)^n(2n+2)!}{6^nn!(n+2)!}=0.59032\,00617\,95601\,049\]
{\footnotesize
\[\begin{array}[13]{rrrrrrrrrrrrr}
  4.2 &  5.3 &  6.6 &  8.3 &  9.7 & 10.8 & 13.2 & 13.5 & 15.0 & 16.3 & 17.4 & 18.3 & 18.4 \\
  4.1 &  5.1 &  6.2 &  7.5 &  8.8 & 10.2 & 11.8 & 13.9 & 15.0 & 16.2 & 17.9 & 19.0 & 18.6 \\
  4.3 &  6.4 &  7.8 &  9.4 & 11.1 & 12.8 & 14.6 & 16.4 & 18.0 & 19.0 & 18.8 & 18.8 & 18.6
\end{array}\]
}
\end{przyklad}

\begin{przyklad}[$x=1/2, v=1/2, r=1$]
\[\nsum \bigl(-\tfrac 12\bigr)^n\sqrt{n+1}=
0.56602\,56214\,93012\,046\]
{\footnotesize
\[\begin{array}[14]{rrrrrrrrrrrrrr}
  4.7 &  7.2 &  7.8 &  9.9 & 10.8 & 12.6 & 13.8 & 15.4 & 16.9 & 18.5 & 18.1 & 18.1 & 18.1 \\
  4.4 &  5.7 &  7.2 &  8.8 & 10.5 & 12.4 & 14.9 & 17.6 & 17.6 & 18.1 & 18.1 & 18.1 & 18.1 \\
  5.5 &  7.0 &  8.4 &  9.8 & 11.1 & 12.3 & 13.5 & 14.6 & 15.8 & 17.0 & 19.0 & 18.1 & 18.1
\end{array}\]
}
\end{przyklad}

\begin{przyklad}[$x=1, v=-1, r=6$] \label{Ex7}%
\begin{equation}\label{Eex7}\nsum \frac{(-1)^n}{n+\sqrt n+\sqrt[3]{n+1}} =
0.81139\,68270\,43132\,432\end{equation}
{\footnotesize
\[\begin{array}[14]{rrrrrrrrrrrrrr}
  4.4 &  5.9 &  7.2 &  7.8 &  8.6 &  9.4 & 10.3 & 11.1 & 11.9 & 12.7 & 13.5 & 14.2 & 15.0 \\
  4.5 &  5.8 &  6.8 &  7.6 &  8.3 &  9.1 &  9.9 & 10.6 & 11.3 & 12.1 & 12.8 & 13.5 & 14.1 \\
  5.2 &  6.3 &  7.8 &  8.7 & 10.0 & 10.6 & 12.5 & 12.4 & 14.4 & 14.2 & 15.8 & 16.0 & 17.3 \\
  4.5 &  5.6 &  7.1 &  7.6 &  9.0 &  9.9 & 10.8 & 12.4 & 13.0 & 14.0 & 16.4 & 16.1 & 17.2
\end{array}\]
}%
It should be remarked that, although the quantities
$\beta_n=\alpha_{n+1}/\alpha_n$ for the considered series satisfy
the relation~\eqref{Ebeta} with $r=6$, it is possible to
find the decomposition
$\alpha_n = \sum_{i=1}^{6} (-1)^{\eta_i}\db{\alpha}in,$
such that each $\db \beta in\coloneqq \db\alpha i{n+1}/\db\alpha i{n}$
satisfies~\eqref{Ebeta} with $r=1$. However, we checked that
applying the method~\met S to all of the decomposed series
$\nsum (-1)^n \db\alpha in$ separately does not give better results.
\end{przyklad}

\begin{przyklad}[$x=1, v=e-7/2, r=2$] %
\[\nsum \frac{(-1)^n n^e}{n+(n+1)^{7/2}}=-0.02049\,06107\,716 \]
{\footnotesize
\[\begin{array}[14]{rrrrrrrrrrrrrr}
  2.2 &  3.0 &  4.0 &  5.1 &  6.1 &  7.2 &  8.4 &  9.6 & 10.7 & 11.9 & 13.4 & 14.5 & 15.8 \\
  2.3 &  3.3 &  4.6 &  6.6 &  6.8 &  8.0 &  9.7 & 10.0 & 10.9 & 12.0 & 13.0 & 13.8 & 14.7 \\
  3.1 &  3.6 &  4.4 &  5.2 &  6.1 &  7.0 &  7.9 &  8.7 &  9.5 & 10.4 & 11.2 & 12.0 & 12.9 \\
  2.3 &  3.8 &  4.5 &  5.7 &  6.4 &  8.1 &  8.5 & 10.4 & 10.7 & 12.8 & 12.9 & 15.3 & 15.1
\end{array}\]
}%
\end{przyklad}

\section{Some additional remarks}\label{S:divergent}
It should be noted that assumption~\eqref{Ealpha} does not imply the convergence
of~the series~\eqref{Ealtser}. It ensures only the correctness of all the
theorems given in Section~\ref{S:Wyniki}.
However, if the~series is~divergent, the method~\met{S} can still be used.
Such a~summation of divergent series can also be performed by the classic
transformations of~Levin and Weniger, which is well discussed in the mentioned
report of Weniger~\cite{Weniger}.

For instance, the power series $1-x+x^2-x^3+\ldots$ has the limit $(1+x)^{-1}$
if $|x|<1$, and diverges if $x=1$ (here, one can consider the so-called
\textit{antilimit}~$1/2$). It is easy to check that the method~\met S gives
$\db s1n=1/2$ for all $n\geq 0$.

The power series $1-2x+3x^2-4x^3+\ldots$,
which converges to $(1+x)^{-2}$ if $|x|<1$,
is a~more complicated example. For~$x=1$, one should expect that
the method~\met{S} will give the approximations of the number $1/4$.
Indeed, although the consecutive approximations $\db s 0n=s_n$ are equal
to $1,-1,2,-2,3,-3,\ldots$, one obtains the following approximations
$\db skn$ (for $k=1,2,3$), converging to $1/4$:
\begin{align*}
  \db s 1n & =\frac{1}{4}\left[1+\frac{(-1)^{n+1}}{(2n+5)}\right], \\
  \db s 2n & =\frac{1}{4}\left[1-\frac{3\,(-1)^{n+1}}{(2n+3)(2n+5)(2n+7)}\right], \\
  \db s 3n & =\frac{1}{4}\left[1-\frac{3(n-3)(-1)^{n+1}}{(2n+3)(2n+5)(2n+7)(2n+9)(2n^2+11n+13)}\right].
\end{align*}
This should be compared with the following results
obtained using the method~\met L:
\begin{align*}
  \db s 2n & =\frac{1}{4}\left[1-\frac{(-1)^{n+1}}{2n^3+16n^2+40n+31}\right], \\
  \db s 3n & =\frac{1}{4}\left[1-\frac{3\,(-1)^{n+1}}{4n^5+62n^4+372n^3+1084n^2+1544n+867}\right].
\end{align*}
It should be also remarked that the method \met W gives significantly better results,
since $\db s3n=1/4$ for all $n\geq 0$.

It is notable that in a~more general case, namely for the hypergeometric series
\[(1+x)^{-\rho}={}_1F_0(\rho,-x)=\nsum \frac{(\rho)_n}{n!}\,(-x)^n\]
($\rho \in \mathbbm R\setminus\{-1\}$), we are dealing with the convergence for $x=1$
if and only if $\rho<0$. However, the~method~\met S transforms in the
first step (which is equivalent to Aitken and the first step of
Levin and Weniger transformations), the above series into the
hypergeometric series
\[\frac{1}{\rho+1}\,{}_2F_1\left(\rho,\tfrac12(\rho-1);\tfrac12(\rho+3);-1\right),\]
which converges if $\rho<2$; one can prove this by using
formula~\eqref{Dsi1}. The analysis of the next steps of the
method~\met S seems to be quite difficult,
but undoubtedly some of them give the series converging to~$2^{-\rho}$.
From this and the previous examples, it appears that the results
produced by the method~\met{S} seem to be similar to those
obtained by the classic Levin and Weniger transformations,
in the case of summation of divergent series, as well.

Finally, it should also be remarked that method~\met S can be programmed quite
efficiently. For that purpose, it is recommendable to write the recurrence
scheme~\eqref{Eskn} in the following way:
\[\db  s kn=\frac{\db s {k-1}n+\db tkn\db s {k-1}{n+1}}{1+\db tkn}, \]
where $\db tkn\coloneqq \frac{n+2k-1}{(n+1)\beta_{n+1}}.$
Then the computation of the value $\db s kn$ costs
$2$ divisions, $2$ multiplications and $2$ additions,
where at least, each one of them has only one floating point argument;
this calculation does not include the cost of computing the numbers $\beta_n$,
since these values are being used many times and for many series they are
often much more simple than the terms $\alpha_n$.
On the other hand, the methods~\met{L} and~\met{W}, in their simplest variants,
can be programmed such that computation of $\db skn$ costs $1$ division,
$4$ multiplications and $2$ additions. However, for the sake of numerical
stability, it is recommendable to use a~certain scaled version
of $3$-term recurrence formulas defining these methods
(see~\cite[Eqs.~(7.2-8), (8.3-7)]{Weniger}), which indeed significantly
increases their complexity.

Again, let us remark that programming the method \met S (using the above formula)
does not involve the computation of separate two-dimensional arrays of numerators
and denominators as it is in the case of Levin and Weniger transformations
(and Sidi's generalizations, as well).
Thus the new method is significantly cheaper if we take into account the memory usage.

\section*{Acknowledgements}
I~would like to express my sincere gratitude to Prof.~S.~Paszkowski,
who initiated the proposed convergence acceleration method.
The provided assistance and valuable comments were crucial for this research.

Finally, I would like to thank the both reviewers. Their comments significantly
improved the presentation of this paper.

\bibliographystyle{abbrv}
\bibliography{bibliography}
\end{document}